\documentclass[pdflatex]{sn-jnl}

\usepackage{graphicx}%
\usepackage{multirow}%
\usepackage{amsmath,amssymb,amsfonts}%
\usepackage{amsthm}%
\usepackage{mathrsfs}%
\usepackage[title]{appendix}%
\usepackage{xcolor}%
\usepackage{textcomp}%
\usepackage{manyfoot}%
\usepackage{booktabs}%
\usepackage{algorithm}%
\usepackage{algorithmicx}%
\usepackage{algpseudocode}%
\usepackage{listings}%
\usepackage{natbib}
\usepackage{hyperref}
\usepackage{multibib}
\usepackage{fullpage}
\usepackage{lineno}
\newtheorem{lemma}{Lemma}
\newtheorem{proposition}{Proposition}

\newcites{App}{References}

\theoremstyle{thmstyleone}%
\newtheorem{theorem}{Theorem}

\theoremstyle{thmstyletwo}%

\theoremstyle{thmstylethree}%

\begin{document}

\title{Regional factors determine the feasibility of an elimination strategy}


\author[1]{\fnm{George} \sur{Adu-Boahen}}
\author[2]{\fnm{Troy} \sur{Day}}
\author[3]{\fnm{Michael J.} \sur{Plank}}
\author[1,4*]{\fnm{Amy} \sur{Hurford}}


\affil[1]{Mathematics \& Statistics Department, Memorial University, Canada}
\affil[2]{Mathematics \& Statistics Department, Queen's University, Canada}
\affil[3]{School of Mathematics \& Statistics, University of Canterbury, New Zealand}
\affil[4]{Biology Department, Memorial University, Canada}
\affil[*]{Corresponding author: ahurford@mun.ca}



\abstract{Assessment of whether an elimination or suppression strategy might be implemented during a pandemic have been made for specific countries, but combinations of control measures, and how regional factors may affect public health strategy decisions has been less explored. We provide an explicit mathematical characterization of regional characteristics and the resulting epidemiology to evaluate whether elimination or suppression is feasible using targeted measures, such as case isolation and contact tracing. We derive an epidemic model where community members can be infected by either travellers or other community members, and with constraints on the public health resources available to support isolation of arriving travellers and infected community members. We prove that the optimal controls are to immediately implement public health measures at their maximum levels. We find that elimination or suppression strategies are feasible with targeted measures if public health capacity is high, the transmission rate is low, or if the arrival rate of infected travellers is low. Our results illustrate that a particular country that implemented a mitigation strategy during the pandemic may not necessarily have achieved better outcomes if it had instead implemented elimination because elimination may not have been feasible in countries with low public health capacity to support control measures, high transmission rates, or high traveller arrival rates. Combinations of regional characteristics determine whether an elimination, suppression or mitigation strategy without resurgence is feasible. We find that regions with the low traveller arrival rates and the capacity to implement and enforce isolation or quarantine of arriving travellers are less likely to exceed their capacity to contact trace and isolate community members. Travel measures prevent only a small number of community infections via their direct effect, but indirectly, by protecting contact tracing capacity, travel measures can prevent epidemic resurgence and substantially reduce the number of people infected during a pandemic before a vaccine or therapy is developed.
}

\keywords{elimination, mitigation, suppression, importations, travel measures, pandemic, feasibility, contact tracing}

\maketitle

\section{Introduction}\label{sec:intro}

The public health response of different countries during the COVID-19 pandemic has been compared and discussed in terms of whether an elimination or mitigation strategy was implemented and how many severe infections were averted \citep{peng2023relative}. An elimination strategy, also referred to during the pandemic as a `zero-COVID' strategy, aims to reduce infections to zero \citep{baker_elimination_2020, metcalf_challenges_2021, heywood_eradication_2020, wu_aggressive_2021}. Whether an elimination strategy is best, or even possible, depends on characteristics of the country \citep{martignoni_is_2024,french2025preparing}. We investigate the role of regional characteristics to better understand how the feasibility of an elimination strategy can differ between regions and to inform the interpretation of between-country comparisons of pandemic response strategies. 

During a pandemic, an elimination strategy often, but not always, requires strong border controls to minimize the number of imported infections as failure to contain these imported infections can lead to community outbreaks. A proposed definition of elimination during a pandemic is that incidence of community-acquired infections is zero, and that jurisdictions would maintain their elimination status if infections occur only in arrivals that are isolating \citep{baker_elimination_2020}.  Suppression aims to reduce disease incidence to low levels but allows for some transmission. Mitigation strategies aim to slow the spread of an infectious disease to reduce the health impact and avoid overwhelming healthcare capacities \citep{baker2023covid, wu_aggressive_2021,thompson2026infectious}. Typically, mitigation involves flattening the epidemic curve and eventually reaching the herd immunity threshold, where a sufficient proportion of the population has been infected, such that control measures can be relaxed without epidemic resurgence. In contrast, unless the disease can be eradicated globally or vaccines can provide sterilizing immunity, elimination and suppression are typically temporary strategies that minimize the number of infections until an alternative strategy becomes acceptable \citep{french2025preparing}. This could be the development of a vaccine or therapy that is effective at reducing transmission and/or disease severity to an acceptable level. These definitions of possible pandemic response strategies are not mutually exclusive, but characterizing a combination of public health measures as elimination, suppression, or mitigation is helpful to define the strategic aims and set the public's expectations for what the results of complying with public health measures are expected to be, which supports transparent communication and public trust.

Regional factors impact the best COVID-19 public health strategies because they impact the epidemiology and whether an elimination strategy is feasible in terms of the economic cost, and healthcare infrastructure and societal implications \citep{martignoni_is_2024}. Implicit in the interpretation of some between-country comparisons of the COVID-19 pandemic response is that countries could have reduced COVID-19 mortality if they had adopted the same public health strategy as countries that achieved low mortality due to COVID-19. Such comparisons are based on definitions of `peer countries', for example those with similar social and political systems, per capita income, and population size \citep{razak}. This definition overlooks the amount of cross-border travel and trade, which affects the cost of travel measures and the arrival rate of imported infections when no travel measures are implemented, and both of these factors will affect which strategies are optimal and feasible. The approach of defining peer countries does not make explicit how the characteristics of peer countries impacts the epidemiology of these regions or how these characteristics interact.

We will show that the feasibility of different pandemic response strategies depends on characteristics of the country or region, and that the assessment of how many infections could have been averted if other pandemic response strategies were implemented is best conducted by epidemic scenario modelling that accounts for these characteristics. Describing the spread of infections with a mathematical model enables quantitative handling of key considerations that may differ between regions, such as the importation rate or local transmission rate. The scenario modelling approach is preferable to simply comparing peer countries, since the peer countries may be similar but not be identical, and the mathematical model describes the epidemiological impact of region-specific factors and their interrelatedness.

The goal of our study is to investigate epidemiological and public health factors that influence the feasibility of eliminating or suppressing a pandemic infectious disease with targeted control measures, such as case isolation and contact tracing. We consider a model for control of an epidemic in the presence of imported, travel-related, infections and with constraints on the public health resources available to support testing and isolation of cases in the community and of recently arrived travellers. Our results show that region-specific factors play an important role, implying that while elimination or suppression may be desirable strategies in some jurisdictions, they may not be feasible in others.  We wish to highlight that generalities such as that an elimination strategy is the best approach, or that there are no circumstances in which travel measures are appropriate to control a pandemic, are not helpful for some jurisdictions and there needs to be better contextualization of recommendations.

\section{Model}\label{sec:model}
\subsection{Epidemiological and control resource consumption dynamics}\label{sec:epi}
We consider a deterministic compartment-based model for epidemiological and control resource consumption dynamics on a time interval $t = [0,T]$. These dynamics are, 
\begin{align}
    \dot{S} &=-\beta S(I_1 + I_2) \label{mm1}, \\
    \dot{I}_1 &=\beta S(I_1 + I_2) -(\mu + u_1) I_1 \label{mm2},\\
    \dot{I_2} &= p\theta (1-u_2) -\gamma I_2 \label{mm},\\
    \dot{U}_{1} &= u_1I_1, \\
    \dot{U}_{2} &= \theta  u_2, \label{eq:U2-}
\end{align}
 where $S(t)$ is the number of susceptible community members, $I_1(t)$ is the number of infectious community members, and $I_2(t)$ is the number of infectious travellers that do not self-isolate. The force of infection term assumes that susceptible community members are infected at a rate $\beta>0$ per infected community member or per infected traveller that does not self-isolate per unit time, and the rate that community members become uninfectious is $\mu>0$ per unit time. The rate that infected travellers arrive in the community is $p\theta$, where $\theta > 0$ is the arrival rate of travellers per unit time and $p \in (0,1)$ is the proportion of arriving travellers that are infected. The rate that infectious travellers become uninfectious is $\gamma$, with $\gamma > \mu$ because community members were in the community from their first day of infectiousness, while travellers may have spent some of their infectious period outside the community. The initial conditions for the epidemiological variables are all positive, i.e., $S(0)>0$, $I_1(0)>0$, and $I_2(0)>0$.

There are several different ways that importations might be included in an epidemic model. Our formulation assumes that travellers are a source of infection for susceptible members of the community, but does not explicitly consider the origin of the infected travellers or any infectious disease dynamics at the origin. Further, our formulation assumes that the community members themselves do not become travellers. This model formulation was chosen because we can examine the risk of infection from a non-community source without adding substantial model complexity. Further, we do not consider births and background mortality of community members. 

The control variables in equations \eqref{mm1}-\eqref{eq:U2-} are $u_1(t)$ and $u_2(t)$, which, respectively, represent the daily isolation rate per infected community member and the probability that a traveller isolates, which is assumed to be the same for infectious and uninfectious travellers. This models a quarantine policy that applies equally to all travellers regardless of their infection status. The community control, $u_1(t)$, is modelled as a rate because it takes some time for infectious community members to be contact traced and isolated. The traveller control, $u_2(t)$, is modelled as a probability because travellers, on arrival, know that they are expected to isolate or quarantine immediately. We assume that initially none of these community test-trace-isolate, $U_1(0)=0$,  and traveller isolation, $U_2(0)=0$, resources have been used. Our formulation assumes that both infected and uninfected travellers are required to self-isolate, and so $\theta$ (the traveller arrival rate), rather than $p\theta$ (the \emph{infected} traveller arrival rate), appears in equation~\eqref{eq:U2-}. We assume (without proof) that the state variables in equations \eqref{mm1}-\eqref{eq:U2-} are non-negative for all time.

 An elimination strategy can involve the implementation of travel measures in combination with community measures, and so we need to consider both of these controls in our model. Note that our model considers targeted control measures, such as community testing, case isolation and contact tracing (referred to as test-trace-isolate or TTI) and traveller quarantine or self-isolation, but does not consider the application of broad public health and social measures (PHSMs), such as school and business closures, gathering restrictions, and stay-at-home orders that would affect the community transmission rate.

\subsection{Control resource constraints}\label{sec:constraints}
The controls are required to be piecewise continuous functions,
\[ u_1: [0,T] \rightarrow [0, u_{1\text{max}}] \qquad \qquad u_2: [0,T] \rightarrow [0, u_{2\text{max}}], \]

where $0$ corresponds to no control and $u_{1\text{max}}>0$ and $u_{2\text{max}} \in (0,1)$ correspond to the maximum values of $u_1(t)$ and $u_2(t)$ respectively.

We assume that the total resources available to control the outbreak are constrained such that,
\begin{eqnarray}
   U_{1}(T) \leq U_{1\text{max}} & \qquad \mbox{and} \qquad & U_{2}(T) \leq U_{2\text{max}} \label{rc2}
\end{eqnarray}
with $U_{i\text{max}}>0$. Examples of such resources for $U_{1}(T)$ are funding to pay the staff employed in testing, tracing, and isolating infected community members, financial support to those required to isolate, resources such as test consumables, and social license for the contact tracing and isolation measures themselves. Examples of resources for $U_{2}(T)$ include funding to pay the staff involved in implementing and enforcing post-arrival travel measures, as well as the necessary resources, such as isolation facilities or testing equipment to complete these activities, and the social license that affords the traveller's willingness to comply.

\subsection{Cost function}\label{sec:cost}
We assume that the aim of public health measures is to minimize the cumulative number of infections in the community outbreak over some period of time $T$,
\begin{align}
    J = \int_0^{T} \beta S(I_{1} +  I_{2}) \,\, dt, \label{obj}
\end{align}
subject to the epidemiological and control resource consumption dynamics (equations \eqref{mm1}-\eqref{eq:U2-}) and the control resource constraints (equations \eqref{rc2}).

\subsection{Explanation of how regional characteristics are represented}
Given the parameters that we have defined, we consider local public health capacity as being represented by the maximum daily community member isolation rate, $u_{1\text{max}}$, the maximum fraction of travellers that comply with self-isolation requirements, $ u_{2\text{max}}$, and the total amount of resources that are available to support isolation of community members, $U_{1\text{max}}$, and travellers, $U_{2\text{max}}$. We view the community transmission rate, $\beta$, to reflect characteristics of the local community, such as contact rates and the frequency of large gatherings in poorly ventilated spaces. We view the traveller arrival rate, $\theta$, as another regional characteristic, for example, where remote, inaccessible and undesirable regions would have fewer travellers arriving per day. The aim of our analysis is to formulate a model that can explore how the feasibility of public health strategies can depend on regional public health capacity and regional characteristics.

\subsection{Explanation of the fixed $T$ assumption}
We run the model for a fixed time horizon $T$, representing the time at which an effective vaccine or therapy becomes available, effectively ending the epidemic immediately. This contrasts with a previous approach that has defined the end of the epidemic to be when the number of infectious people falls below some threshold \citep{hansen_optimal_2011}. However, the latter is not appropriate for our model because even if $I_1(t)$ is very small, the outbreak cannot be considered `over', because the ongoing imported infections mean that there is always a future risk of epidemic resurgence if the susceptible population is sufficiently large. 

Assuming a fixed time horizon is clearly a simplification: in reality, any vaccines or treatment will be imperfect and will not be accessed by 100\% of the population concurrently. Therefore, there will still be costs incurred after the vaccine or treatment becomes available and potentially new trade-offs to consider between the costs of implementing controls and the costs of infection. Nonetheless, working with a fixed time horizon $T$ is a reasonable simplification that allows for the possibility of strategies that eliminate or suppress the epidemic until a vaccine or treatment is available. One drawback of this modelling approach is that it can reward strategies that push infections that would otherwise occur just before time $T$ beyond the time horizon. This is not necessarily a realistic or desirable strategy, and we interpret our results with this in mind. For the most part, the strategies that we investigate tend to result either in an epidemic wave that subsides before time $T$, or in elimination or suppression of the epidemic until time $T$. We believe that these are reasonable alternatives to consider, and describe these in more detail in Section \ref{sec:PH_strategies}.

\subsection{Interpreting the deterministic model formulation}

In situations where the number of infectious individuals is small (e.g. when an outbreak is close to elimination), a branching process model or stochastic individual-based model is normally preferable to a deterministic model, as they accurately capture that if the number of infected individuals falls to zero, the epidemic cannot rebound in the absence of infection re-introduction from outside the population. The model we have used here has the limitation that the infectious compartments $I_1(t)$ and $I_2(t)$ can be arbitrarily small, but are always strictly positive. This means that the epidemic is never completely eliminated and can rebound from unrealistically low values of $I_1(t)$ and $I_2(t)$, which could represent fewer than one individual in a finite population. 

As a consequence, this model would not be suitable for investigating the elimination of outbreaks in the absence of imported cases. However, we argue that the model is a reasonable tool for the problem investigated here because it captures the epidemic rebound that is expected to occur as a result of imported infections when controls are relaxed and the susceptible population is sufficiently large. Thus, when $0<I_1(t)<1$ or $0<I_2(t)<1$ in our model, we interpret this as an average prevalence, representing periods of zero prevalence being punctuated by sporadic community outbreaks that are initiated by infected travellers and are eventually brought under control. Likewise, the variables $U_1(t)$ and $U_2(t)$ track the cumulative resources expended in controlling community outbreaks initiated by travellers in an average sense as opposed to increasing in discrete jumps. Provided $U_1(t)<U_{1\mathrm{max}}$ (and community controls are strong enough to bring the reproduction number below $1$), the cost of maintaining elimination status is relatively low (but non-zero) because the number of community infections that need to be isolated is small.
Future work using a stochastic model to address these issues would be helpful, although this would limit the potential for analytical insights and require a more computationally intensive approach. The deterministic model we present here is useful as it describes the structure that a stochastic model could be based upon and could inform the interpretation of any future stochastic modelling.

\subsection{Definitions of public health strategies} \label{sec:PH_strategies}

Our definitions of different public health strategies are provided in Table \ref{tab:strategy-defns}. We define the distinction between elimination/suppression and mitigation as depending on the control reproduction number, $R_0^C$, which for our system of equations \eqref{mm1}-\eqref{mm} is defined as
\begin{equation}
R_0^C = \frac{\beta S(0)}{\mu + u_{1\text{max}}},
\end{equation}
and is the number of secondary infections generated by an infectious individual in a wholly susceptible population when community controls are implemented at their maximum rate. The control reproduction number, $R_0^C$, is a threshold condition for local stability of the disease-free equilibrium assuming no community infections due to travellers.

In our deterministic model formulation, it is difficult to clearly distinguish between elimination and suppression strategies due to the continuous presence of imported infections. We therefore categorize outcomes primarily as being either mitigation ($R_0^C>1$) or elimination/suppression ($R_0^C<1$), with a secondary categorization of whether or not resurgence occurs (determined by whether or not resources are exhausted before time $T$). 

\begin{table}[h]
        \centering 
        \caption{Definition of public health strategies for community isolation}
        \begin{tabular}{p{2cm}p{12cm}}
        \toprule
            \textbf{Public health strategy} & \textbf{Description} \\
            \midrule
           Elimination & Aims to reduce community infection prevalence to zero locally, but not in all regions, such that there remains a risk of disease importation \citep{baker_elimination_2020, metcalf_challenges_2021}. Defined as $R_0^C < 1$. Our definition of elimination also requires that community isolation measures are sufficient to remain in place for $T$ days, which is the condition that $U_{1}(t) \leq U_{1\text{max}}$ $\forall t \in [0,T]$.\\
            \midrule
            Suppression & Aims to reverse epidemic growth \citep{ferguson_report_2020} and bring the number of cases down to a low level \citep{heywood_eradication_2020, james_suppression_2020}, noting that community transmission may still take place \citep{wu_aggressive_2021, baker_elimination_2020, heywood_eradication_2020}. Defined as $R_0^C < 1$ and $U_{1}(t) \leq U_{1\text{max}}  \forall t \in [0,T]$. Note that given our deterministic model formulation, we cannot distinguish between elimination and suppression.\\
            \midrule
            Mitigation & Aims to slow the spread of an infectious disease to avoid overwhelming healthcare capacities and to reduce overall morbidity and mortality \citep{baker2023covid, wu_aggressive_2021}. Defined as control measures do not reverse epidemic growth, $R_0^C \geq 1$. Our definition of mitigation also requires that community isolation measures are sufficient to remain in place for $T$ days, which is the condition that $U_{1}(t) \leq U_{1\text{max}}  \forall t \in [0,T]$.  \\
            \midrule
            Resurgence after elimination or suppression & Public health measures, when implemented, result in elimination or suppression ($R_0^C < 1$), but community isolation resources are insufficient to be used for the entire outbreak and infections increase when the community isolation resources are exhausted. Defined as $U_{1}(t) = U_{1\text{max}}$ for some $t<T$. Removal of control when the population is mostly susceptible will always result in epidemic resurgence for our model. \\
            \midrule
            Resurgence after mitigation & Public health measures, when implemented, result in mitigation ($R_0^C \geq 1$), but community isolation resources are insufficient to be used for the entire outbreak and infections increase when the community isolation resources are exhausted. Defined as $U_{1}(t) = U_{1\text{max}}$ for some $t<T$. \\
            \botrule
        \end{tabular}
        \label{tab:strategy-defns}
    \end{table}

\subsection{Numerical methods}

To verify the analytical result above, we numerically determine the optimal controls by solving the boundary value problem involving the transversality conditions and using the \texttt{CTDirect} method from the \texttt{OptimalControl} package for Julia \citep{Caillau_OptimalControl_jl_a_Julia}. We characterized the epidemiological dynamics for specific controls, $u_1(t)$ and $u_2(t)$, using the \texttt{DeSolve} package for R \citep{Soetaert2010deSolve}. Julia and R code to run the model and reproduce the results in this article are publicly available at \href{here}{https://github.com/ahurford/elimination-mitigation}.

\begin{table}[h]
        \centering 
        \caption{Default values of parameters and variables. Simulation results use these default values unless otherwise stated.}
        \begin{tabular}{p{2cm}p{5cm}p{5cm}}
        \toprule
            \textbf{Symbol} & \textbf{Description} & \textbf{Value} \\
        \botrule
        $R_0 = \beta S(0)/\mu$ & number of secondary infections in the wholly susceptible community when there are no control measures & 2 \\
            $\beta$ & transmission rate in the community & $R_0\mu/S(0)$ per individual per day \\
            $\mu$ & rate that community members become uninfectious & 1/10 per day \\
            $p$ & proportion of arriving travellers that are infected & 1/1000 \\
            $\theta$ & arrival rate of travellers & 500 individuals per day \\
            $\gamma$ & rate that travellers become uninfectious & $2\mu$ per day \\
            \midrule
            $u_{1\text{max}}$ & maximum daily isolation rate per infected community member & 0.15 per day \\
            $u_{2\text{max}}$ & maximum probability that a traveller isolates & 0.99\\
            $U_{1\text{max}}$ & number of community members that can be isolated before $T$ given total public health resources & 1000 individuals \\
            $U_{2\text{max}}$ & number of travellers that comply with isolation requirements before $T$ given total public health resources & $0.8 \times T \times \theta$ individuals \\
            \midrule
            $N$ & population size & 10,000 individuals\\
            $I_1(0)$ & initial infection prevalence in the community & 1 individual\\
            $S(0)$ & initial number of susceptible community members & $N - I_1(0)$ individuals\\
            $I_2(0)$ & initial infection prevalence in travellers & $p\theta/\gamma$ individuals \\
            $T$ & time until an effective vaccine or therapy becomes available & 365 days \\
            \botrule
        \end{tabular}
        \label{tab:params}
    \end{table}

\section{Results}

\subsection{Pontryagin's maximization principle to determine the optimal control}

We apply the theory of Pontryagin's maximization principle to determine the optimal control that minimizes the cumulative number of infections. To do this, we reframe the problem outlined in Sections \ref{sec:epi}-\ref{sec:cost} as the maximization of $-J$. The Hamiltonian for our problem is,
\begin{equation}
H= -\beta S (I_1 + I_2)\Psi -(\mu + u_1) I_1 \lambda_{I_1} + (p\theta (1-u_2) -\gamma I_2)\lambda_{I_2} +  u_1I_1\lambda_{U_1} + \theta u_2\lambda_{U_2}, 
\end{equation}
where $\Psi := \lambda_0 + \lambda_S - \lambda_{I_1}$, with the adjoint variables defined by the equations
\begin{eqnarray}
\dot{\lambda}_S & = & \beta (I_1+I_2)\Psi, \label{eq:ad1} \\
\dot{\lambda}_{I_1} & = &\beta S \Psi +  \mu \lambda_{I_1}  +u_1(\lambda_{I_1}-\lambda_{U_1}), \\
\dot{\lambda}_{I_2} & = & \beta S \Psi + \gamma \lambda_{I_2}, \\
\dot{\lambda}_{U_1} & = & 0, \\
\dot{\lambda}_{U_2} & = & 0, \label{eq:adend1} \\
\dot \lambda_0 &=&0. \label{eq:adend} 
\end{eqnarray}
Here $\lambda_0 \geq 0$ is the objective functional multiplier, the transversality conditions are $\lambda_S(T)$ = $\lambda_{I_1}(T)$ = $\lambda_{I_2}(T)$ = 0, and 
the Karash-Kuhn Tucker conditions (\citealt{kkt}, Theorem 5.8) give,
\[ \lambda_{U_1}(T) \leq 0, \quad \quad  \lambda_{U_1}(T)(U_1(T) - U_{1\text{max}})=0, \quad \quad \lambda_{U_2}(T) \leq 0, \quad \quad  \lambda_{U_2}(T)(U_2(T) - U_{2\text{max}})=0. \]
By constancy and the Karush-Kuhn Tucker Theorem, $\lambda_{U_1} \equiv \lambda_{U^*_1}\leq 0$ and $\lambda_{U_2} \equiv \lambda_{U^*_2} \leq 0$ where we have used asterisks to denote the value of $\lambda_{U_i}$ associated with the optimal control. 
The Hamiltonian is affine in $(u_1, u_2)$ and so we define the reduced switching functions
\[
\varphi_1(t)
:=
\lambda_{U_1}^*-\lambda_{I_1},
\qquad
\varphi_2(t)
:=
\lambda_{U_2}^*-p\lambda_{I_2}.
\]
These have the same signs as the corresponding coefficients of \(u_1\)
and \(u_2\) in the Hamiltonian since \(I_1>0\) and \(\theta>0\). The maximum condition then implies $u_i^*(t) \in \{0, u_{i\max}\}$ for almost every $t$ for which $\varphi_i(t) \ne 0$.
\begin{theorem}
The optimal control is given by
\[
u_i^*(t)
=
\begin{cases}
u_{i\max}, & 0\leq t<\tau_i,\\
0, & \tau_i<t\leq T,
\end{cases}
\qquad i=1,2.
\]
for some \(\tau_1,\tau_2\in[0,T]\). 
\end{theorem}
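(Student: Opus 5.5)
The plan is to read the bang--bang structure directly off the switching functions. For each $i$ I will show that $\varphi_i$ is positive on an initial interval, non-positive afterwards, and never vanishes on a set of positive measure. The coefficient of $u_1$ in $H$ is $I_1\varphi_1$ and that of $u_2$ is $\theta\varphi_2$. Maximizing $H$ therefore gives $u_i^*=u_{i\max}$ where $\varphi_i>0$ and $u_i^*=0$ where $\varphi_i<0$, so one sign change from $+$ to $-$ yields the switching times $\tau_i$. The terminal values fix the end of the picture. By transversality $\lambda_{I_1}(T)=\lambda_{I_2}(T)=0$, so $\varphi_1(T)=\lambda_{U_1}^*\le 0$ and $\varphi_2(T)=\lambda_{U_2}^*\le0$. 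Hence the controls are (weakly) off at the final time, and what remains is to rule out any switch from off back to on.

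The first step is to normalize and sign the adjoints. I would argue $\lambda_0>0$, normalized to $\lambda_0=1$: if $\lambda_0=0$ then at $T$ we have $\Psi(T)=0$, and the linear adjoint system together with the KKT conditions forces all multipliers to vanish, contradicting nontriviality. Next I show $\Psi>0$ on $[0,T]$ by a backward comparison argument, since $\Psi(T)=\lambda_0>0$ and $\Psi$ satisfies a linear ODE. Given $\Psi>0$, the substitution $z=\lambda_{I_2}e^{-\gamma t}$ gives $\dot z=e^{-\gamma t}\beta S\Psi>0$ with $z(T)=0$, so $\lambda_{I_2}<0$ on $[0,T)$. The analogous integrating-factor argument applied to $\dot\lambda_{I_1}=\beta S\Psi+(\mu+u_1)\lambda_{I_1}-u_1\lambda_{U_1}^*$ gives $\lambda_{I_1}<0$ on $[0,T)$. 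Two cases are then immediate. If $\lambda_{U_1}^*=0$ (resource constraint not binding), then $\varphi_1=-\lambda_{I_1}>0$ on $[0,T)$ and $\tau_1=T$. Likewise, $\lambda_{U_2}^*=0$ gives $\varphi_2=-p\lambda_{I_2}>0$ and $\tau_2=T$.

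The main obstacle is the binding case $\lambda_{U_i}^*<0$, where I need that $\varphi_i$ crosses zero at most once and only downward. For $i=1$, put $w=\lambda_{I_1}-\lambda_{U_1}^*=-\varphi_1$, which satisfies
\[
\dot w=\beta S\Psi+\mu\lambda_{U_1}^*+(\mu+u_1)w .
\]
At any zero of $w$ we get $\dot w=\beta S\Psi+\mu\lambda_{U_1}^*$, so it suffices to show $g(t):=\beta S\Psi+\mu\lambda_{U_1}^*>0$ wherever $w=0$. Then every zero is a transversal upward crossing of $w$, so there is at most one, and in particular no singular arc exists. My first attempt at proving $g>0$ is to use constancy of the Hamiltonian, since the problem is autonomous, to relate $\beta S\Psi$ at the crossing time to terminal quantities. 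Failing that, I would differentiate $g$, using $\dot\Psi$ together with $\dot S<0$, and show that $g$ can change sign at most once and is positive at the relevant times. For $i=2$ the same structure appears. Writing $v=p\lambda_{I_2}-\lambda_{U_2}^*$, at a zero we have $\dot v=p\beta S\Psi+\gamma\lambda_{U_2}^*$, and I would handle it with the same argument.

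Once the single downward crossing is established, define $\tau_i$ as the crossing time when one exists. If there is none, set $\tau_i=0$ when $\varphi_i\le0$ throughout and $\tau_i=T$ when $\varphi_i>0$ on $[0,T)$. The maximum condition then yields exactly the stated form of $u_i^*$ almost everywhere, and piecewise continuity fixes it everywhere except at $\tau_i$.
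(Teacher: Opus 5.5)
Your reduction is sound up to the point you flag as the main obstacle. But the binding case $\lambda_{U_i}^*<0$ is the whole content of the theorem, and there you only list strategies, neither of which closes.

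Hamiltonian constancy cannot isolate $\beta S\Psi+\mu\lambda_{U_1}^*$ at a zero of $w$: it brings in $I_1$, $I_2$, $\lambda_{I_2}$ and $u_2$ with no sign control. Nor is $g$ positive in general, since $g(T)=\beta S(T)+\mu\lambda_{U_1}^*<0$ whenever $-\mu\lambda_{U_1}^*>\beta S(T)$. Differentiating $g$ is the right instinct. However, a direct computation from $\dot S$ and $\dot\lambda_S$ gives $\dot g=\frac{d}{dt}(\beta S\Psi)=-\beta S\dot\lambda_{I_1}=-\beta S\dot w$, so showing $g$ is monotone is equivalent to what you are trying to prove.

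The missing idea is to stop looking at zeros and study $\dot w$ globally. The maximum condition gives $u_1^*w=u_{1\max}\min\{w,0\}$ a.e., including on $\{w=0\}$. Hence $\dot w=g+\mu w+u_{1\max}\min\{w,0\}$ is continuous, and
\[
\ddot w=\bigl(\mu+u_{1\max}\mathbf{1}_{\{w<0\}}-\beta S\bigr)\dot w \qquad\text{a.e.},
\]
which is a homogeneous scalar linear equation. So $\dot w$ has the sign of $\dot w(T)=\beta S(T)>0$: the $\mu\lambda_{U_1}^*$ terms cancel, and the $\min$ term vanishes because $w(T)=-\lambda_{U_1}^*\ge 0$. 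This makes $\varphi_1=-w$ strictly decreasing on $[0,T]$. That covers the binding and non-binding cases and excludes singular arcs in one stroke.

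Two further steps are also unsupported.

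First, $\Psi>0$ does not follow from a backward comparison. The equation $\dot\Psi=\beta(I_1+I_2-S)\Psi-(\mu+u_1)\lambda_{I_1}+u_1\lambda_{U_1}^*$ is not closed in $\Psi$. The full adjoint system is not cooperative either, because the $\lambda_S$--$\lambda_{I_1}$ couplings have opposite signs. So your conclusions $\lambda_{I_1},\lambda_{I_2}<0$, and with them your non-binding case, currently rest on nothing. $\Psi>0$ is true, but only a posteriori, from $\frac{d}{dt}(\beta S\Psi)=\beta S\dot\varphi_1<0$ together with $\beta S(T)\Psi(T)=\beta S(T)>0$.

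Second, the case $i=2$ is not ``the same argument''. There $\ddot\varphi_2=\gamma\dot\varphi_2-p\beta S\dot\varphi_1$ is inhomogeneous. You need $\dot\varphi_1<0$ from the $u_1$ step to get $\frac{d}{dt}\bigl(e^{-\gamma t}\dot\varphi_2\bigr)>0$, and hence $\dot\varphi_2(t)<e^{-\gamma(T-t)}\dot\varphi_2(T)=-p\beta S(T)e^{-\gamma(T-t)}<0$.

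A smaller point: your nondegeneracy sketch skips the inhomogeneous term $-u_1\lambda_{U_1}^*$ that survives when $\lambda_0=0$. It needs a case split on $\lambda_{U_1}^*$ combined with complementary slackness.
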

The proof of Theorem 1 makes use of the following two Lemmas:
\begin{lemma}\label{lem:nondegen}
The optimal control is non-degenerate; i.e., $\lambda_0 \neq 0$ and so $\lambda_0>0$.
\end{lemma}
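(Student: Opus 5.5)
Suppose, for contradiction, that \(\lambda_0=0\). We will show that every multiplier then vanishes, which contradicts the nontriviality requirement of Pontryagin's principle with endpoint (KKT) constraints. That requirement says the vector \((\lambda_0,\lambda_S,\lambda_{I_1},\lambda_{I_2},\lambda_{U_1^*},\lambda_{U_2^*})\) is never identically zero.

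With \(\lambda_0=0\) we have \(\Psi=\lambda_S-\lambda_{I_1}\). The adjoint equations for \((\lambda_S,\lambda_{I_1},\lambda_{I_2})\) then form a linear homogeneous system with bounded, piecewise continuous coefficients:
\[
\dot\lambda_S=\beta(I_1+I_2)(\lambda_S-\lambda_{I_1}),\qquad
\dot\lambda_{I_1}=\beta S(\lambda_S-\lambda_{I_1})+(\mu+u_1)\lambda_{I_1}-u_1\lambda_{U_1^*},
\]
\[
\dot\lambda_{I_2}=\beta S(\lambda_S-\lambda_{I_1})+\gamma\lambda_{I_2}.
\]
The only inhomogeneous term is \(-u_1\lambda_{U_1^*}\).

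The first step is to show that \(\lambda_{U_1^*}=0\) and \(\lambda_{U_2^*}=0\). We split on whether each resource constraint is slack or active.
\begin{itemize}
\item If \(U_1(T)<U_{1\max}\), complementary slackness gives \(\lambda_{U_1^*}=0\) directly.
\item If \(U_1(T)=U_{1\max}\), then \(u_1\neq0\) on a set of positive measure, since \(U_{1\max}>0\). We then argue through the maximum condition. Suppose \(\lambda_{U_1^*}<0\). Take the backward solution from the terminal data \(\lambda_S(T)=\lambda_{I_1}(T)=\lambda_{I_2}(T)=0\), and track the signs on a final interval where \(\varphi_1=\lambda_{U_1^*}-\lambda_{I_1}<0\), so that \(u_1=0\). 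On that interval the system is homogeneous with zero terminal data, so \(\lambda_S,\lambda_{I_1},\lambda_{I_2}\) vanish there.
\item Propagating backwards, \(\varphi_1\equiv\lambda_{U_1^*}<0\) for all \(t\), so \(u_1\equiv0\). This contradicts \(U_1(T)=U_{1\max}>0\). Hence \(\lambda_{U_1^*}=0\).
\end{itemize}

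The same argument applies to \(u_2\), using \(\varphi_2=\lambda_{U_2^*}-p\lambda_{I_2}\) and \(U_{2\max}>0\). The only case to handle separately is an inactive constraint, where complementary slackness again gives \(\lambda_{U_2^*}=0\).

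Once \(\lambda_{U_1^*}=\lambda_{U_2^*}=0\), the adjoint system is fully linear and homogeneous with zero terminal data. By uniqueness of solutions, \(\lambda_S\equiv\lambda_{I_1}\equiv\lambda_{I_2}\equiv0\). All multipliers then vanish, which contradicts nontriviality. Therefore \(\lambda_0\neq0\), and since \(\lambda_0\ge0\) we conclude \(\lambda_0>0\). We may then normalise to \(\lambda_0=1\).

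The main obstacle is the active-constraint case. The key point is that a strictly negative \(\lambda_{U_i^*}\) with \(\lambda_0=0\) forces the switching function to be strictly negative near \(T\). Bootstrapping that sign backwards via uniqueness must show \(u_i\equiv0\) on all of \([0,T]\), which rules out exhausting a positive resource budget. Some care is needed with singular arcs and piecewise continuity: we use that \(\varphi_i\) is continuous and that the adjoints stay identically zero on any interval where \(u_1=0\).
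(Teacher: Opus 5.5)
Your proof is correct and follows essentially the same argument as the paper. Assuming $\lambda_0=0$, a strictly negative $\lambda_{U_1^*}$ gives $\varphi_1(T)<0$, so the costates vanish on a terminal interval where $u_1=0$; this propagates back to $t=0$ and yields $U_1(T)=0$, contradicting complementary slackness. With $\lambda_{U_1^*}=0$ the costates then vanish identically, so nontriviality would require $\lambda_{U_2^*}<0$, which forces $u_2\equiv 0$ and again contradicts complementary slackness. Your slack/active split is only a reorganization of the paper's case split on the sign of $\lambda_{U_1^*}$, and your ``same argument'' for $u_2$ is valid precisely because $\lambda_{U_1^*}=0$ is established first, so that $\varphi_2\equiv\lambda_{U_2^*}$ (the adjoint system does not involve $u_2$, so no bootstrap is actually needed there).
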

\begin{proof}: First note that the equations for the non-constant adjoint variables $\lambda:=(\lambda_S, \lambda_{I_1}, \lambda_{I_2})$ can be written in matrix form as $\dot \lambda=A \lambda+\lambda_0 b+ \lambda_{U_1}c$ where $A$ (a $3 \times 3$ matrix) and $b$ (a vector) depend on the state variables, and $c=(0,-u_1(t),0)$. Now suppose, for contradiction, that $\lambda_0=0$. There are two cases with respect to the value of $\lambda_{U_1}$:
\begin{itemize}[leftmargin=*]
\item Case 1: $\lambda_{U_1}^*<0$. In this case $\varphi_1(T)=\lambda_{U_1}^* - \lambda_{I_1}(T)=\lambda_{U_1}^*<0$ and so $u_1(t)=0$ in a some final interval containing $T$ (because $\varphi_1(t)$ is continuous). During this interval the costate variables obey $\dot \lambda=A \lambda$ with terminal condition $\lambda(T)=(0,0,0)$ and so their solution is $\lambda(t)=(0,0,0)$ during this interval. This argument can be bootstrapped backwards in time to $t=0$, showing that $\lambda(t)=(0,0,0)$ for all $t \in [0,T]$, and thus $u_1(t)=0$ a.e. on $ [0,T]$. As a result, $U_1(T)=0$ and so $\lambda_{U_1}^*(0 - U_{1\max}) \neq 0$ which contradicts the condition that $\lambda_{U_i}(T)(U_i(T) - U_{i\max}) = 0$.
\item Case 2: $\lambda_{U_1}^*=0$. In this case the adjoint variables obey $\dot \lambda=A \lambda$ with terminal condition $\lambda(T)=(0,0,0)$ and so their solution is $\lambda(t)=(0,0,0)$ for all $t \in [0,T]$. Since all adjoint variables cannot be identically zero along the optimal control, we must have $\lambda_{U_2}^*<0$. As a result, $\varphi_2(t) = \lambda_{U_2}^* - p\, \lambda_{I_2}(t)= \lambda_{U_2}^*<0$ and so $u_2(t)=0$ a.e. on $[0,T]$. As a result, $U_2(T)=0$ and so $\lambda_{U_2}^*(0 - U_{2\max}) \neq 0$ which contradicts the condition that $\lambda_{U_i}(T)(U_i(T) - U_{i\max}) = 0$.
\end{itemize}
\end{proof}
\begin{lemma}\label{lem:identity}
On any subinterval on which $u_1$ and $u_2$ are continuous (and hence the state and adjoint variables are $C^1$) we have
\[
\frac{d}{dt}\bigl(\beta\, S\, \Psi\bigr) = -\beta\, S\, \dot\lambda_{I_1}.
\]
\end{lemma}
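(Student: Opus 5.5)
The identity is a direct product-rule computation on a subinterval where $u_1$ and $u_2$ are continuous. There, the state equations \eqref{mm1}--\eqref{eq:U2-} and the adjoint equations \eqref{eq:ad1}--\eqref{eq:adend} have continuous right-hand sides, so $S$, $\lambda_S$ and $\lambda_{I_1}$ are $C^1$. Since $\lambda_0$ is constant by \eqref{eq:adend}, $\Psi=\lambda_0+\lambda_S-\lambda_{I_1}$ is $C^1$ with $\dot\Psi=\dot\lambda_S-\dot\lambda_{I_1}$. This justifies differentiating $\beta S\Psi$ classically, and $\beta$ is a constant.

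By the product rule,
\[
\frac{d}{dt}\bigl(\beta S\Psi\bigr)=\beta\dot S\,\Psi+\beta S\,\dot\lambda_S-\beta S\,\dot\lambda_{I_1}.
\]
I would substitute $\dot S=-\beta S(I_1+I_2)$ from \eqref{mm1} into the first term, giving $-\beta^2 S(I_1+I_2)\Psi$. I would substitute $\dot\lambda_S=\beta(I_1+I_2)\Psi$ from \eqref{eq:ad1} into the second term, giving $+\beta^2 S(I_1+I_2)\Psi$. These two terms cancel exactly, leaving $-\beta S\,\dot\lambda_{I_1}$, which is the claim.

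There is no real obstacle. The only point needing care is the regularity statement: on such a subinterval the right-hand sides are continuous in $t$, so the Carath\'eodory solutions are classical $C^1$ solutions there. Note that the identity does not use the specific form of $\dot\lambda_{I_1}$, nor Lemma~\ref{lem:nondegen}; only the constancy of $\lambda_0$ is needed.
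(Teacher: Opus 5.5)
Your proposal is correct and follows the same route as the paper's proof: apply the product rule and substitute $\dot S=-\beta S(I_1+I_2)$ and $\dot\Psi=\beta(I_1+I_2)\Psi-\dot\lambda_{I_1}$ (using $\dot\lambda_0=0$). The two $\beta^2 S(I_1+I_2)\Psi$ terms then cancel, leaving $-\beta S\,\dot\lambda_{I_1}$.
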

\begin{proof}:
Direct computation using $\dot S = -\beta S(I_1+I_2)$ and $\dot\Psi = \dot\lambda_0+\dot\lambda_S - \dot\lambda_{I_1} = \beta(I_1+I_2)\Psi - \dot\lambda_{I_1}$ gives
\[
\tfrac{d}{dt}(\beta S\Psi) = -\beta^2 S(I_1+I_2)\Psi + \beta S\bigl[\beta(I_1+I_2)\Psi - \dot\lambda_{I_1}\bigr] = -\beta S\,\dot\lambda_{I_1}. \qedhere
\]
\end{proof}
\subsubsection{Proof of Theorem 1}

Without loss of generality we normalize the adjoint variables such that $\lambda_0=1$ and so $\Psi=1+\lambda_S-\lambda_{I_1}$ and $\Psi(T)=1$. The Proof of Theorem 1 follows directly from Propositions 1 and 2 below.
\begin{proposition}[The control $u_1$]
\label{prop:u1-one-switch}
The switching function \(\varphi_1\) is strictly decreasing. Consequently,
\(u_1^*\) has at most one switch, necessarily from \(u_{1\max}\) to \(0\).
\end{proposition}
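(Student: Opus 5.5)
The plan is to show that $w(t):=\dot\lambda_{I_1}(t)$ is strictly positive on $[0,T]$. Since $\lambda_{U_1}^*$ is constant, $\dot\varphi_1=-\dot\lambda_{I_1}=-w$, so $w>0$ gives the claim. From the adjoint equation,
\[
w=\beta S\Psi+\mu\lambda_{I_1}+u_1(\lambda_{I_1}-\lambda_{U_1}^*)=\beta S\Psi+\mu\lambda_{I_1}-u_1\varphi_1 .
\]
The idea is to derive a linear homogeneous differential equation for $w$, compute $w(T)$ from the transversality conditions, and integrate backwards.

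\textbf{Step 1 (regularity of $w$).} The coefficient of $u_1$ in $H$ is $I_1\varphi_1$ with $I_1>0$, so the maximum condition gives $u_1=u_{1\max}$ where $\varphi_1>0$ and $u_1=0$ where $\varphi_1<0$. Hence $u_1\varphi_1=u_{1\max}\max(\varphi_1,0)$ wherever $\varphi_1\neq0$, and trivially $u_1\varphi_1=0$ where $\varphi_1=0$. So $u_1\varphi_1$ is a continuous, indeed absolutely continuous, function of $t$, even though $u_1$ itself may jump. Therefore $w$ is absolutely continuous on $[0,T]$, even across switching times. A.e. we have $\frac{d}{dt}\bigl(u_{1\max}\max(\varphi_1,0)\bigr)=u_1\dot\varphi_1$. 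This holds on the set where $\varphi_1=0$ as well, because there $\dot\varphi_1=0$ a.e.

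\textbf{Step 2 (the equation for $w$).} Differentiate $w$, using Lemma~\ref{lem:identity} for the first term, $\frac{d}{dt}(\beta S\Psi)=-\beta S w$, together with $\dot\varphi_1=-w$. This gives, for a.e. $t$,
\[
\dot w=-\beta S\,w+\mu w+u_1 w=(\mu+u_1-\beta S)\,w .
\]
This is a scalar linear ODE with a bounded measurable coefficient, so
\[
w(t)=w(T)\exp\!\Bigl(-\int_t^T(\mu+u_1(s)-\beta S(s))\,ds\Bigr).
\]
Consequently the sign of $w$ is constant on $[0,T]$.

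\textbf{Step 3 (terminal value).} By Lemma~\ref{lem:nondegen} and the normalization $\lambda_0=1$, we have $\Psi(T)=1$. Also $\lambda_{I_1}(T)=0$, so $\varphi_1(T)=\lambda_{U_1}^*\le0$. Hence
\[
w(T)=\beta S(T)-u_1(T)\lambda_{U_1}^*\ge\beta S(T)>0,
\]
using $S(T)>0$, which follows from $\dot S=-\beta S(I_1+I_2)$ and $S(0)>0$. Thus $w>0$ on $[0,T]$, and $\varphi_1$ is strictly decreasing.

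\textbf{Conclusion and main obstacle.} A strictly decreasing continuous $\varphi_1$ vanishes at most at one point $\tau_1$, so singular arcs cannot occur. We get $\varphi_1>0$ (hence $u_1^*=u_{1\max}$) before $\tau_1$ and $\varphi_1<0$ (hence $u_1^*=0$) after it. If $\varphi_1$ has no zero, set $\tau_1=0$ or $\tau_1=T$ according to its sign. The main obstacle is Step 1: justifying the differentiation of $w$ across switching times, where $u_1$ is discontinuous and Lemma~\ref{lem:identity} is stated only on intervals of continuity. The continuity of $u_1\varphi_1$ is what resolves this, and it should be checked carefully. As a fallback, I would apply the argument piecewise: on each interval where $u_1$ is constant, $w$ satisfies the ODE. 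At each switching time $\varphi_1=0$, so the jump of $w$, which equals $-\Delta u_1\,\varphi_1$, vanishes. Chaining these intervals backwards from $T$ then preserves positivity.
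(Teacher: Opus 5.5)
Your proposal is correct and follows essentially the same route as the paper. Your $w=\dot\lambda_{I_1}$ is simply $-\dot\varphi_1$. The ingredients all match the paper's $\ddot\varphi_1=A\dot\varphi_1$ argument: the linear equation $\dot w=(\mu+u_1-\beta S)w$, the identity $\varphi_1u_1^*=u_{1\max}\max\{\varphi_1,0\}$ for continuity across switches, and the backward exponential representation from the terminal value. Your Step~1 justification of the a.e.\ derivative of the max term is more careful than the paper's. Also, your terminal bound is actually an equality, $w(T)=\beta S(T)$, since $\max\{\varphi_1(T),0\}=0$.
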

\begin{proof}:
The maximum condition gives
\[
u_1^*(t)
=
\begin{cases}
u_{1\max}, & \varphi_1(t)>0,\\
0, & \varphi_1(t)<0,
\end{cases}
\qquad\text{for a.e. }t.
\]
At points where \(\varphi_1=0\), the value of \(u_1^*\) is not determined by \(\varphi_1\) but the product \(\varphi_1u_1^*\) is always given by \begin{equation}
\label{eq:phi1-u1-product}
\varphi_1u_1^*
=
u_{1\max}\max\{\varphi_1,0\}
\qquad\text{a.e.},
\end{equation}
Differentiating $\varphi_1$ gives
\begin{align}
\dot\varphi_1
&=
-\dot\lambda_{I_1}\notag\\
&=
-\beta\, S\, \Psi-\mu(\lambda_{U_1}^*-\varphi_1)+\varphi_1u_1^*\notag\\
&=
-\beta\, S\, \Psi-\mu \lambda_{U_1}^*+\mu\varphi_1
+u_{1\max}\max\{\varphi_1,0\}.
\label{eq:phi1-first-order}
\end{align}
The right-hand side of \eqref{eq:phi1-first-order} is continuous and therefore \(\varphi_1\in C^1([0,T])\). Moreover, we can differentiate \eqref{eq:phi1-first-order} almost everywhere to obtain
\begin{align}
\ddot \varphi_1
&=
-\frac{d}{dt}[\beta\, S\, \Psi]+(\mu+u_{1\max}\mathbf{1}_{\{\varphi_1>0\}})\dot \varphi_1\notag\\
&=
\left[
\mu
+u_{1\max}\mathbf 1_{\{\varphi_1>0\}}
-\beta S
\right]\dot \varphi_1
\qquad\text{a.e.}
\label{eq:w1-linear}
\end{align}
Define
\[
A(t)
:=
\mu
+u_{1\max}\mathbf 1_{\{\varphi_1(t)>0\}}
-\beta S(t).
\]
Since \(A\in L^\infty(0,T)\), the scalar linear equation
\eqref{eq:w1-linear} can be written
\begin{equation}
\label{eq:w1-representation}
\dot \varphi_1(t)
=
\dot \varphi_1(T)
\exp\left(
-\int_t^T A(s)\,ds
\right).
\end{equation}
By transversality $\lambda_{I_1}(T)=0$ and hence $\varphi_1(T)=\lambda_{U_1}^*\leq 0$. Also, $\beta S(T)\Psi(T)=\beta S(T)$. Therefore, evaluating \eqref{eq:phi1-first-order} at \(T\) gives $\dot \varphi_1(T)=-\beta S(T)<0$. It follows from \eqref{eq:w1-representation} that $\dot\varphi_1(t)<0$ for all $t\in[0,T]$ (a.e.). Thus \(\varphi_1\) is strictly decreasing and can vanish at most once. Since the maximum condition selects \(u_{1\max}\) when
\(\varphi_1>0\) and \(0\) when \(\varphi_1<0\), there exists
\(\tau_1\in[0,T]\) such that, up to equality almost everywhere,
\[
u_1^*(t)
=
\begin{cases}
u_{1\max}, & 0\leq t<\tau_1,\\
0, & \tau_1<t\leq T.
\end{cases}
\]
\end{proof}
\begin{proposition}[The control $u_2$]
\label{prop:u2-one-switch}
The switching function \(\varphi_2\) is strictly decreasing. Consequently,
\(u_2^*\) has at most one switch, necessarily from \(u_{2\max}\) to \(0\).
\end{proposition}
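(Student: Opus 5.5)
The plan is to mirror the proof of Proposition~\ref{prop:u1-one-switch}, but to exploit the fact that the dynamics of $\lambda_{I_2}$ do not involve $u_2$. Differentiating the switching function gives
\[
\dot\varphi_2=-p\,\dot\lambda_{I_2}=-p\bigl(\beta S\Psi+\gamma\lambda_{I_2}\bigr),
\]
since $\lambda_{U_2}^*$ is constant. So it suffices to show that
\[
w(t):=\beta S(t)\Psi(t)+\gamma\lambda_{I_2}(t)=\dot\lambda_{I_2}(t)
\]
is strictly positive on $[0,T]$. Because $S$, $\Psi$ and $\lambda_{I_2}$ are continuous, $w$ is continuous and $\varphi_2\in C^1([0,T])$. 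At the terminal time, transversality gives $\lambda_{I_2}(T)=0$ and, with the normalization $\lambda_0=1$ from Lemma~\ref{lem:nondegen}, $\Psi(T)=1$. Hence $w(T)=\beta S(T)>0$.

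Next I would derive a linear differential equation for $w$. On each subinterval where the controls are continuous, Lemma~\ref{lem:identity} gives $\frac{d}{dt}(\beta S\Psi)=-\beta S\dot\lambda_{I_1}$. Proposition~\ref{prop:u1-one-switch} gives $\dot\lambda_{I_1}=-\dot\varphi_1$, so this derivative equals $\beta S\dot\varphi_1$. The adjoint equation gives $\frac{d}{dt}(\gamma\lambda_{I_2})=\gamma w$. Therefore
\[
\dot w=\gamma w+\beta S\,\dot\varphi_1\qquad\text{a.e. on }[0,T].
\]
The function $w$ is absolutely continuous, since it is built from $C^1$ or Lipschitz pieces and the controls have finitely many discontinuities (indeed $u_1^*$ has at most one). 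So this equation can be integrated across the switching points with variation of constants, from $t$ to $T$:
\[
w(t)=e^{-\gamma(T-t)}w(T)-\int_t^T e^{-\gamma(s-t)}\beta S(s)\,\dot\varphi_1(s)\,ds.
\]

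The key input is Proposition~\ref{prop:u1-one-switch}, which shows $\dot\varphi_1<0$, together with $S>0$. With these, the integral term is nonnegative and the first term is strictly positive. Hence $w(t)>0$ for all $t\in[0,T]$, so $\dot\varphi_2=-p\,w<0$ and $\varphi_2$ is strictly decreasing. It therefore vanishes at most once. The maximum condition selects $u_{2\max}$ when $\varphi_2>0$ and $0$ when $\varphi_2<0$, so $u_2^*$ equals $u_{2\max}$ before some $\tau_2\in[0,T]$ and $0$ after it, up to a null set.

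The main point requiring care is the justification of the identity for $\dot w$ across discontinuities of $u_1^*$. I would handle it by noting that Lemma~\ref{lem:identity} holds on each open subinterval and that $w$ is continuous, so the integral representation holds piecewise and glues together.
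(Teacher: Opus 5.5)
Your proof is correct and is essentially the paper's argument: your $w=\dot\lambda_{I_2}$ is just $-\dot\varphi_2/p$, and your variation-of-constants formula is the integrated form of the paper's observation that $\frac{d}{dt}\bigl[e^{-\gamma t}\dot\varphi_2\bigr]=-p\beta S\dot\varphi_1 e^{-\gamma t}>0$. Both rest on the same inputs, namely $\dot\varphi_1<0$ from Proposition~\ref{prop:u1-one-switch} and the terminal value $\dot\varphi_2(T)=-p\beta S(T)<0$; your extra care about gluing across switches is harmless but unnecessary, since $\varphi_1\in C^1$ already makes $w$ continuously differentiable on all of $[0,T]$.
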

\begin{proof}:
The maximum condition gives
\[
u_2^*(t)
=
\begin{cases}
u_{2\max}, & \varphi_2(t)>0,\\
0, & \varphi_2(t)<0,
\end{cases}
\qquad\text{for a.e. }t.
\]
Differentiating $\varphi_2$ gives
\begin{align}
\dot\varphi_2
&=
-p\dot\lambda_{I_2}\notag\\
&=
-p[\beta\, S\, \Psi+\gamma \lambda_{I_2}].\notag
\end{align}
Again the right-hand side is continuous and therefore \(\varphi_2\in C^1([0,T])\). We can differentiate almost everywhere to obtain
\begin{align}
\ddot \varphi_2
&=
-p[-\beta\, S\, \dot \lambda_{I_1}+\gamma \dot \lambda_{I_2}]\notag\\
&=
-p \beta S \dot \varphi_1+\gamma \dot \varphi_2.
\end{align}
By Proposition~\ref{prop:u1-one-switch}, $\dot\varphi_1<0$, and therefore, re-arranging and multiplying by the integrating factor $e^{-\gamma t}$ we have 
\[
\frac{d}{dt}[e^{-\gamma t}\dot \varphi_2] =-p\beta\, S\, \dot \varphi_1e^{-\gamma t}>0
\]
and so, $e^{-\gamma t}\dot \varphi_2$ is strictly increasing in time. As a result, $e^{-\gamma t}\dot \varphi_2(t)<e^{-\gamma T}\dot \varphi_2(T)$ or 
\begin{equation}
\dot \varphi_2(t)<e^{-\gamma (T-t)}\dot \varphi_2(T).
\end{equation}
Lastly, we also have that $\dot \varphi_2(T)=-p \beta S(T)<0$ and therefore $\dot \varphi_2(t)<0$ for all $t\in[0,T]$. Thus \(\varphi_2\) is strictly decreasing and can vanish at most once. The maximum condition therefore implies that there exists
\(\tau_2\in[0,T]\) such that, up to equality almost everywhere,
\[
u_2^*(t)
=
\begin{cases}
u_{2\max}, & 0\leq t<\tau_2,\\
0, & \tau_2<t\leq T.
\end{cases}
\]
\end{proof}
\subsection{Numerical results}
Figure \ref{fig:numsolns} shows an example numerical solution to the optimal control problem, where panels G and H illustrate the result of Theorem 1, that initially the optimal controls take their maximum rates and a single switch occurs when all the resources are used. 

\begin{figure}[h]
\centering
\includegraphics[width=.8\textwidth]{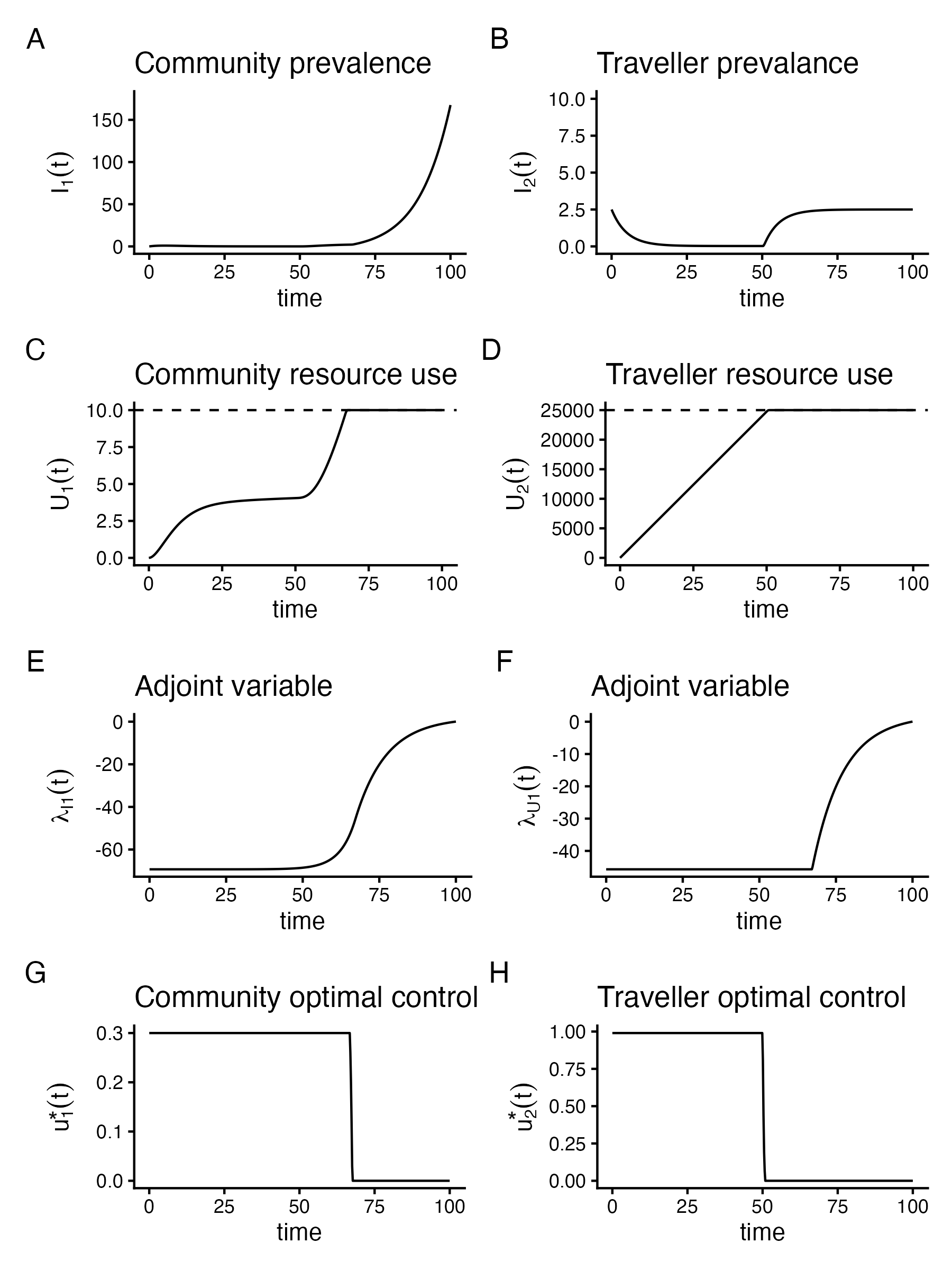}
\caption{The optimal controls (G,H) are determined by the systems of equations \eqref{mm1}-\eqref{eq:U2-} (with their initial conditions; some of which and shown in A-D) and equations \eqref{eq:ad1}-\eqref{eq:adend} (with their boundary conditions as defined by the transversality conditions; some of which are shown in E,F). Parameter values are the default parameter values (Table \ref{tab:params}) except that $u_{1 \max} = 0.3$ per day, $U_{1\text{max}}=30$ individuals, $U_{2\text{max}}$ $=0.5 \times T \times \theta$, and $T=100$ days.}\label{fig:numsolns}
\end{figure}

To test the effect of delaying the introduction of controls, we solved the model defined by Eqs. \eqref{mm1}-\eqref{eq:U2-} where the controls $u_1(t)$ and $u_2(t)$ are both initially zero, are switched to their maximum value at some start time, and are switched off when resources are exhausted. Figure \ref{fig:PHstrategies} shows the results for four different parameter combinations, which could represent regional differences, leading to different epidemiological outcomes: (A) elimination/suppression; (B) mitigation; (C) resurgence after elimination/suppression; (D) resurgence after mitigation. The top panels of each of A-D in Figure \ref{fig:PHstrategies} show the value of the objective function (i.e. total number of infections) across a range of start times for community controls (horizontal axis) and traveller controls (vertical axis) from day 0 to day 300 in 7 day increments.

When the parameters representing local public health resources or region-specific characteristics that affect the epidemiology are such that an elimination/suppression, mitigation, or resurgence after elimination occurs, the percentage of the population that is infected if the control is implemented immediately is much less than if control measures are delayed. This can be observed as the color bar in Figure~\ref{fig:PHstrategies}A-C ranges from 0 to 80$\%$ or more. When resurgence occurs after mitigation, there is less difference between the immediate implementation strategy and strategies that delay the implementation of the control measures. This can be observed as the colorbar in Figure~\ref{fig:PHstrategies}D covers a smaller range: from 65 to 85$\%$. Delaying the implementation of both controls by 28 days increases the percentage of the population infected before the vaccine or therapy is developed from 0.7$\%$ to 3.0$\%$ (A), 29.4$\%$ to 32.0$\%$ (B), 3.0$\%$ to 80.0$\%$ (C), and 68.1$\%$ to 70.0$\%$ (D).

The weak dependence of $J$ on $u_{2,\mathrm{start}}$ (the vertical axis in the upper panels of Figure~\ref{fig:PHstrategies}A-D) indicates that the time of starting the traveller control measure has little relative impact on the objective function, which is the percentage of people infected during the $T$ day period until the vaccine or therapy is developed. However, this illustrates that delaying community measures has a more substantial impact on the objective function, but it does not mean that travel measures are unimportant as we will discuss subsequently. 

For our model, there is no strategy that does better than implementing both controls at their maximum values on day 0 until they are all used up (Theorem 1). We term this the `immediate implementation' control strategy. For the simulations in Figure \ref{fig:strategy} we consider only the immediate implementation structure of the controls and perform numerical simulations to understand how available public health resources and regional factors impact the categorization  of feasible public health strategies (as defined in Table~\ref{tab:strategy-defns}).

\begin{figure}[h]
\centering
\includegraphics[width=1\textwidth]{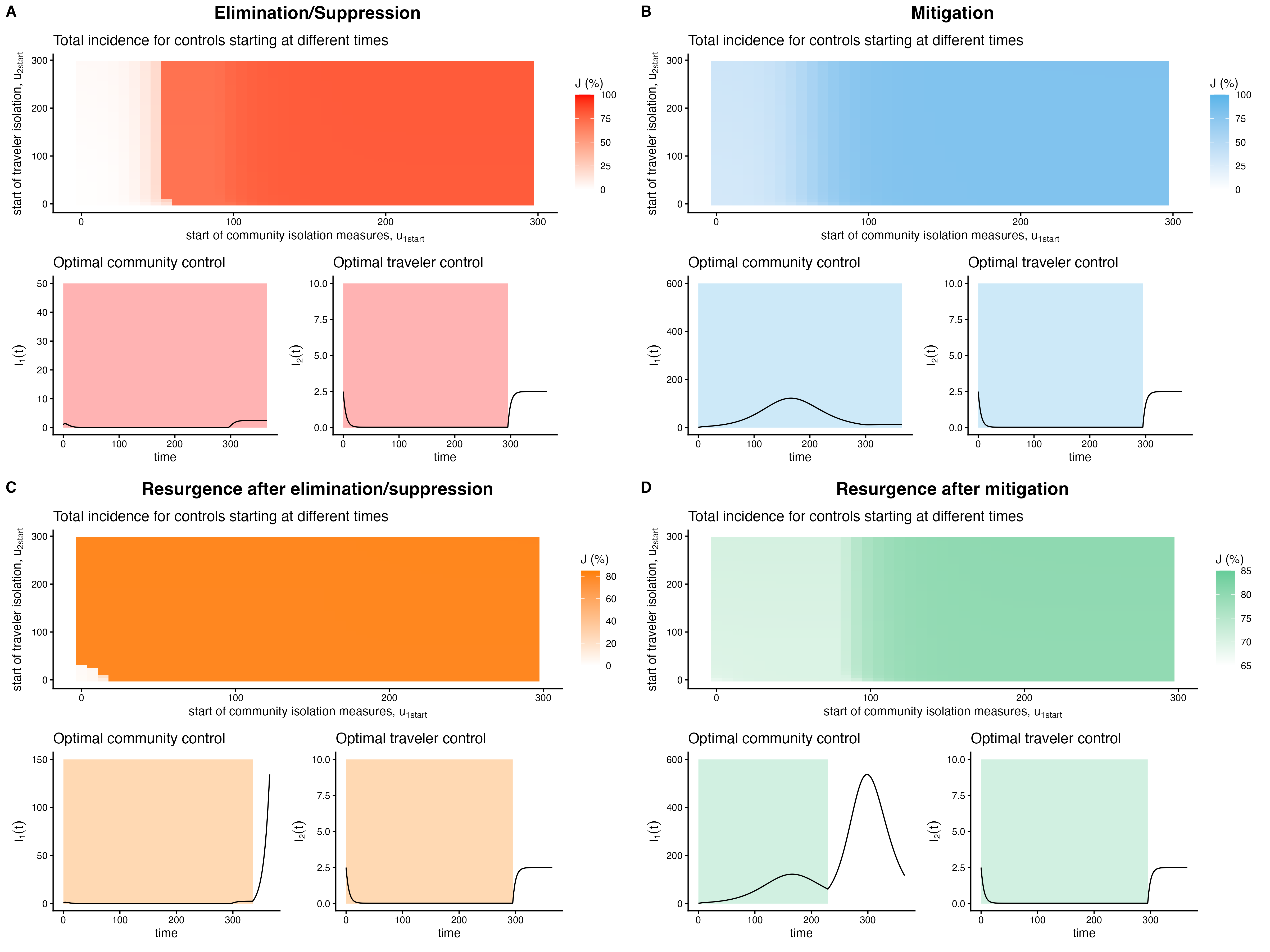}
\caption{Examples of epidemiological dynamics consistent with four possible public health strategies: (A) elimination/suppression, (B) mitigation, (C) resurgence after elimination/suppression, and (D) resurgence after mitigation. The top panels of A-D show the percentage of the population that has been infected during the period $[0,T]$ for community and traveller isolation measures starting at different times. For all of A-D, starting the controls on day 0 (immediate implementation) results in the smallest percentage of the population infected. The lower panels of A-D show community infection prevalence, $I_1(t)$, and infection prevalence in travellers that do not self-isolate, $I_2(t)$, given immediate implementation. The shaded regions indicate $u_1(t) = u_{1\text{max}}$ or $u_2(t) = u_{2\text{max}}$ and the unshaded regions indicate $u_1(t) = 0$ and $u_2(t) = 0$. All of the parameters have the values listed in Table \ref{tab:params} except that: (A) $u_{1\text{max}} = 0.3$ per day; (B) $u_{1 \max} = 0.07$ per day and $U_{1\text{max}}=5000$ individuals; (C) $u_{1\text{max}} = 0.3$ per day and $U_{1\text{max}}=30$ individuals; and (D) $u_{1 \max} = 0.07$ per day. For (A) and (D), $U_{1\text{max}}$ has its default value of 1000 individuals.}\label{fig:PHstrategies}
\end{figure}

We needed to increase the total amount of resources available for isolating community members, $U_{1\text{max}}$, from the default value of 1000 individuals to 5000 individuals, to demonstrate a mitigation strategy without resurgence (Figure \ref{fig:PHstrategies}B). When $u_{1max}$ is too small to achieve elimination or suppression, resurgence often occurs because many community isolation resources are used during a mitigation strategy and the total number of resources is more quickly exhausted. This result is further supported by our next simulations, which find that mitigation (without resurgence) occurs only for a narrow range of public health resources and local epidemiological condition values.

\begin{figure}[h]
\centering
\includegraphics[width=1\textwidth]{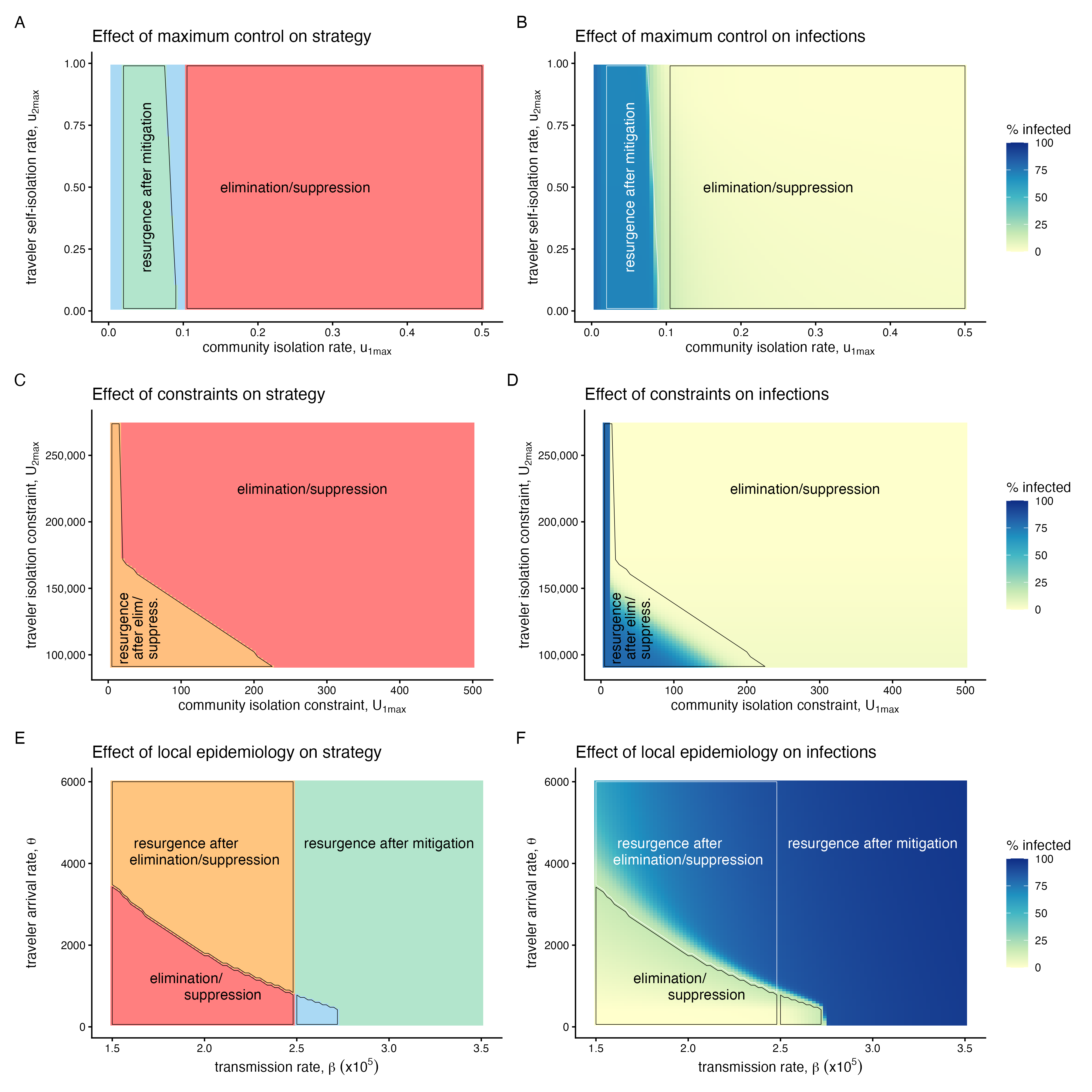}
\caption{Local public health resources (A,C) and local epidemiology (E) determine the best public health strategy and B,D and F show the percentage of the population infected in each of these situations. In A and E mitigation occurs as shown by the light blue fill colour (unlabelled due to small size). All parameters have their default values (Table \ref{tab:params}) except when parameters appear on the axes.}\label{fig:strategy}
\end{figure}

Figure \ref{fig:strategy} demonstrates that the categorization of the epidemiology in terms of public health strategies depends on available public health resources (A,C) and regional variables (E). In Figure \ref{fig:strategy}A, there is a threshold value of the maximum rate of community member isolation $u_{1\text{max}}^{\text{thres}} = \beta S(0) - \mu = 0.1$ per day (corresponding to $R_0^C=1$) such that for $u_{1\text{max}}^{\text{thres}} > u_{1\text{max}}$ a mitigation strategy (with or without resurgence) occurs and for $u_{1\text{max}}^{\text{thres}} < u_{1\text{max}}$ an elimination/suppression strategy occurs. Only a very small region of parameter space finds mitigation without resurgence (light blue in Figure \ref{fig:strategy}A,E) occurs because often community resources are exhausted before time $T$ when mitigation occurs, leading to mitigation with resurgence (green in Figure \ref{fig:strategy}A). Similarly, we do not find that resurgence occurs after elimination/suppression (i.e., there is no orange region in Figure \ref{fig:strategy}A). This is because when elimination/suppression occurs, there are not a lot of community isolation resources used because there are few total infections (Figure \ref{fig:strategy}B). 

In addition to considering differences in local public health resources as these affect the maximum rates and proportions, we also consider differences in the total amount of public health resources available during the $T$ day period until a vaccine or treatment is discovered. We find that resurgence after elimination/suppression occurs when the total amount of resources available to support isolation of community members, $U_{1\text{max}}$, or the total amount of resources available to support traveller compliance with self-isolation requirements, $U_{2\text{max}}$, is small (Figure~\ref{fig:strategy}C). When $U_{1\text{max}}$ and $U_{2\text{max}}$ are large enough to be sufficient to last until time $T$, we find that elimination/suppression occurs. For the default parameters, $R_0^C = 0.8$ and so Figure \ref{fig:strategy}C illustrates elimination/suppression with or without subsequent resurgence because $U_{1\text{max}}$ and $U_{2\text{max}}$ do not impact whether elimination/suppression or mitigation occurs. If the default values corresponded to $R_0^C > 1$, the strategies that were shown to depend on different values of $U_{1max}$ and $U_{2max}$ in Figure~\ref{fig:strategy}C would be mitigation with, and without, resurgence. The percentage of the community that is infected prior to the discovery of the vaccination or treatment is much smaller when the total resources are sufficient to last until $T$ (Figure~\ref{fig:strategy}D).

In contrast to Figure~\ref{fig:strategy}A,B which finds that the maximum proportion of travellers that comply with self-isolation measures does not substantially impact the optimal strategy or the percentage of the community that are infected, we find in Figure~\ref{fig:strategy}C,D that the total amount of resources available to support traveller compliance with self-isolation measures can impact both whether resurgence occurs and the percentage of the community that is infected. This is because when traveller isolation resources are exhausted, more community members are infected by travellers, which consumes community isolation resources, and can result in resurgence if the total resources to support isolation of community members are exceeded. Therefore, we find that the most substantial impact of traveller isolation measures is to help ensure that the capacity to implement community measures is not exhausted, because exhausting these resources results in resurgence.

The effect of regional factors on the categorization of the optimal public health strategy is shown in Figure \ref{fig:strategy}E. In Figure \ref{fig:strategy}E, there is a threshold value of $\beta^{\text{thres}} = 2.5 \times 10^{-5}$ (corresponding to $R_0^C=1$) such that for $\beta^{\text{thres}}>\beta$ elimination/suppression occurs and for $\beta^{\text{thres}}<\beta$ mitigation occurs. As the traveller arrival rate, $\theta$, increases, then resurgence may also occur as community isolation resources are more quickly exhausted when community members are more often infected by travellers. Mitigation can occur when $\beta$ is slightly larger than $\beta^{\text{thres}}$ and $\theta$ is small. However, once $\beta$ or $\theta$ becomes very large, then resurgence occurs after mitigation because many infections occur and this results in all the resources being used. For all the values considered in Figure \ref{fig:strategy}B,D,F, we find that if resurgence occurs, then a much higher percentage of the community is infected.

\section{Discussion}\label{sec12}

We have investigated alternative epidemic response strategies in a model with imported infections and constraints on the public health resources available to support isolation of community cases and quarantine or self-isolation of arriving travellers. Our model assumes that there is a fixed amount of support that can be given for the implementation of control measures and the cost of community control measures is proportional to time-varying infection prevalence. We assume that community control occurs by implementing targeted TTI measures as opposed to blanket PHSMs, such as school or business closures, that apply to the whole population. The distinction is important because for population-level PHSMs, the resource cost is largely independent of prevalence, which leads to optimal mitigation solutions delaying the introduction of interventions until prevalence is sufficiently high (e.g. \citealt{britton}). In contrast, for our model assumptions we proved that there is nothing to be gained by delaying the introduction of TTI interventions at maximum intensity and in some cases delaying interventions makes the outbreak substantially worse.  

We found that our model resulted in different outcomes depending on epidemiological and available public health resources, where differences in these values might be understood to represent different countries or regions. When the maximum intensity of control measures and the available resources were high, controls were sufficient to achieve elimination or suppression of the disease until time $T$, however, resurgence occurred if resources were exhausted before time $T$. If the maximum intensity of control measures was not sufficient to reduce the effective reproduction number below $1$, mitigation occurred (and was typically followed by resurgence as mitigation uses resources very quickly due to the high incidence).

Some previous work has found that imported infections are likely to contribute little to local epidemics \citep{russell_effect_2021, hollingsworth_will_2006, gumel_modelling_2004, arino_risk_2021, chinazzi_effect_2020, wells_impact_2020}. This may be true once an epidemic is established and community prevalence is sufficiently high, however, our model shows that, when community prevalence can be limited to low levels by case-targeted measures, travel controls can have an important effect. Since there are only a fixed amount of resources available, a higher rate of imported infections can mean that the number of community members infected by travellers can be enough to exceed the local capacity for TTI, which results in a large epidemic that could otherwise be delayed until a vaccine becomes available. Through this mechanism, the impact of travel measures can be substantial. In their report to the House of Assembly, Newfoundland and Labrador Department of Health and Community Services stated that the purpose of travel restrictions was to `help detect, trace, and prevent the spread of imported cases of COVID-19 efficiently and effectively by ensuring traveller compliance with self-isolation requirements' (\citealt{HOA}, p20). Similarly, New Zealand's Royal Commission of Inquiry into Covid-19 found that border restrictions were a key component of the elimination strategy, which successfully reduced the impact of the pandemic \citep{NZRCOI_phase1,NZRCOI_phase2}. 

Our analysis of how the feasibility of public health strategies can depend on regional factors is partly motivated by the World Health Organization's \emph{`Technical considerations for implementing a risk-based approach to international travel in the context of COVID-19: Interim guidance, 2 July 2021'} \citep{who_technical_nodate}. This states that travel volumes, local epidemiology, public health measures and capacity, and contextual factors such as the economic and human rights impact determine whether implementing international travel measures during a pandemic should be considered. This recommendation relates to our study as travel measures are often a component of an elimination strategy.

The value of our mathematical modelling approach is to tangibly describe the situations in which different public health strategies are feasible. While the problem set-up involves the assumptions typically associated with ordinary differential equation model formulations, the results are qualitative, i.e., elimination or mitigation strategies result. The details of any specific application might change quantitative nuances of the feasible public health strategies, but the general principles that we describe are likely applicable in many settings and contribute insights into important public health problems.

Generally, the terminology of elimination, suppression, and mitigation strategies, as potential alternatives, is challenging to work with. Few studies aim to unite the terminology, and those that do, e.g. \cite{baker_elimination_2020,baker2023covid}, have not done so from the perspective of modelling and mathematical definitions. To illustrate the challenges, consider a jurisdiction that has no hospitals with intensive care units. For this jurisdiction, a mitigation strategy (so as to not exceed ICU capacity) likely entails implementing an elimination or suppression strategy (reducing infection prevalence to be low or zero). From the perspective of the terminology, it is not helpful if two regions are both said to be implementing a mitigation strategy, but one with a high healthcare capacity is managing to achieve moderate infection prevalence, while the other with low healthcare capacity is managing to achieve elimination, which from the perspective of the public health measures implemented can look quite different. Public health units might often decide on public health measures without knowing if elimination, suppression, or mitigation is the likely outcome of the implemented measures. Our results show that the differences in the percentage of the population infected for different available public health resources can be very substantial, and so it would be advisable for public health units to seek scientific or modelling support if it is not known whether different potential control strategies would result in elimination, mitigation, or resurgence.

An implication of our results is to motivate more careful comparisons of the effectiveness of the COVID-19 response in different countries. In \cite{peng2023relative}, Canada and Australia are described as peer-countries due to similarity in `income, culture and governance' but are noted to differ in that Australia implemented an elimination strategy, while Canada implemented a mitigation strategy. \cite{peng2023relative} conclude that `a comparison with Australia demonstrated that an elimination focus would have saved Canada tens of thousands of lives as well as substantial economic costs'. However, we know that elimination may not be possible (Figure~\ref{fig:strategy}; see also \citealt{martignoni_is_2024}) in regions with a high importation rate and Canada differs substantially from Australia in this respect. Canada has a substantial amount of trade across the United States border. For example, in 2010, 28.9 million cars and 5.4 million trucks crossed the Canada-US border \citep{transport_canada_2024}, with the flow of goods and services estimated at $\$$1 million United States dollars per minute. As Australia is an island, it has a substantially lower importation rate than Canada, and more readily controllable borders. Therefore, it is possible that an elimination strategy in Canada would not have been feasible, or that it would not have achieved the same reduction in deaths due to COVID-19 as Australia had the strategy been attempted.

Because our model only considers targeted TTI measures and not broader PHSMs, our simulations result in mitigation whenever TTI measures alone are insufficient to reduce the effective reproduction number below $1$. In some situations, it may be possible to achieve elimination/suppression by combining TTI with additional PHSMs. Thus, our results cannot be used to conclude that elimination is not feasible in general, only that it is not feasible with TTI measures alone. Determining whether the use of population-scale PHSMs to support elimination is preferable to a mitigation strategy would require the costs and benefits of alternative strategies to be compared \citep{plank2025joint}. Future work will consider this question using a different modelling framework that allows the costs of infections to be traded off against the costs of community and traveller controls.  

Our work is a starting point in understanding how regional differences can impact the feasibility of public health strategies, and illustrates that between-country comparisons to determine if a different approach to managing COVID-19 might have been better for a particular country should consider travel in addition to economic and social factors that may determine public health resource limits. We also find that regional factors can interact to determine the feasible public health strategies.  Lower traveller arrival rates and greater public health capacity to support the isolation of travellers reduces the amount of community isolation resources needed. In this way, travel measures can protect contact tracing capacity and prevent epidemic resurgence.

\backmatter


\subsection*{Declarations}

\begin{itemize}
\item Funding. G. Adu-Boahen and A. Hurford were supported by the OMNI-REUNIS, which was funded by the Natural Sciences and Engineering Research Council of Canada (NSERC) Emerging Infectious Disease Modelling Initiative. A. Hurford is supported by an NSERC Discovery Grant (RGPIN-2023- 05905). M.J. Plank was supported by the Marsden Fund (grant number 24-UOC-020) and Te Niwha Infectious Diseases Research Platform (grant number TN/P/24/UoC/MP), co-hosted by PHF Science and the University of Otago and provisioned by the Ministry of Business, Innovation and Employment, New Zealand.
\item Conflict of interest/Competing interests (check journal-specific guidelines for which heading to use)
\item Code availability. \href{https://github.com/ahurford/elimination-mitigation}{https://github.com/ahurford/elimination-mitigation}.
\item Author contribution. G Adu-Boahen: model design, analysis, coding, and writing; T. Day: model design, analysis; M. Plank: model design, analysis, coding and writing; A. Hurford: model design, analysis, coding, writing, funding acquisition, and supervision.   
\end{itemize}

\bibliographystyle{sn-vancouver-ay}
\bibliography{references}

\begin{appendices}
\newpage
\setcounter{page}{1}
\renewcommand{\theequation}{A.\arabic{equation}}
\setcounter{equation}{0}
\renewcommand{\thesection}{A.\arabic{section}}
\setcounter{section}{0}




\end{appendices}

\end{document}